\documentclass[12pt]{article}
\usepackage[margin=1in]{geometry}
\usepackage{graphicx} 
\usepackage{algorithm}
\usepackage{algpseudocode}
\usepackage{amsmath}
\usepackage{amsthm,physics}
\usepackage{amssymb}
\usepackage{xcolor}
\usepackage{hyperref}
\usepackage{mathtools}
\usepackage{comment}
\usepackage{subcaption}
\usepackage{bm,authblk,enumerate}
\usepackage{pgf}
\usepackage{booktabs}
\usepackage{censor}

\theoremstyle{plain}
\newtheorem{theorem}{Theorem}[section]

\newtheorem{proposition}[theorem]{Proposition}
\newtheorem{corollary}[theorem]{Corollary}

\theoremstyle{definition}

\theoremstyle{remark}

\newtheorem{remark}[theorem]{Remark}

\numberwithin{equation}{section}

\usepackage{dsfont}

\newcommand{\set}[1]{\left\{#1\right\}}
\newcommand{\R}{\mathbb{R}}
\newcommand{\Z}{\mathbb{Z}}
\newcommand{\cD}{\mathcal{D}}
\newcommand{\cE}{\mathcal{E}}
\newcommand{\cL}{\mathcal{L}}
\newcommand{\cP}{\mathcal{P}}
\newcommand{\cV}{\mathcal{V}}

\newcommand{\bmu}{{\bm{\mu}}}
\newcommand{\blambda}{{\bm{\lambda}}}
\newcommand{\bc}{{\vb{c}}}
\newcommand{\bF}{{\vb{F}}}
\newcommand{\bG}{{\vb{G}}}
\newcommand{\bn}{{\vb{n}}}

\newcommand{\bw}{{\vb{w}}}
\newcommand{\bx}{{\vb{x}}}
\newcommand{\bz}{{\vb{z}}}

\usepackage{etoolbox}
\newlength\Normalbaselineskip
\setlength\Normalbaselineskip{\baselineskip}
\appto\MultlinedHook{\setlength\baselineskip{\Normalbaselineskip}}

\usepackage[utf8]{inputenc}
\DeclareFixedFont{\ttb}{T1}{txtt}{bx}{n}{12} 
\DeclareFixedFont{\ttm}{T1}{txtt}{m}{n}{12}  
\usepackage{color}
\definecolor{deepblue}{rgb}{0,0,0.5}
\definecolor{deepred}{rgb}{0.6,0,0}
\definecolor{deepgreen}{rgb}{0,0.5,0}
\usepackage{listings}
\newcommand\pythonstyle{\lstset{
language=Python,
basicstyle=\ttm,
morekeywords={self},              
keywordstyle=\ttb\color{deepblue},
emph={MyClass,__init__},          
emphstyle=\ttb\color{deepred},    
stringstyle=\color{deepgreen},
frame=tb,                         
showstringspaces=false
}}
\lstnewenvironment{python}[1][]
{
\pythonstyle
\lstset{#1}
}
{}

\newcommand\pythoninline[1]{{\pythonstyle\lstinline!#1!}}

\allowdisplaybreaks

\begin{document}

\title{Upper bounds for the connective constant of weighted self-avoiding walks}
\author{Qidong He}
\affil{Department of Mathematics, Rutgers University}
\date{}

\maketitle

\begin{abstract}
Building on a work by Alm, we consider a model of weighted self-avoiding walks on a lattice and develop a method for computing upper bounds on the corresponding weighted connective constant, which we implement in a publicly available software package.
The upper bounds are obtained as the dominant eigenvalues of certain matrices and provide detailed information about the convergence of the model's multivariate generating function. 
We discuss potential applications of our results to developing Peierls-type estimates for anisotropic contour models in statistical physics, generalizing a technique recently introduced by Fahrbach–Randall.
\end{abstract}

\section{Introduction}

Self-avoiding structures on lattices and their associated growth constants have been a long-standing subject of study across many disciplines \cite{flory1953principles,herrero2005self,michelen2025potential}.
The most prominent example of these is the self-avoiding walk (SAW).
In a seminal work from 1957 \cite{hammersley1957percolation}, Hammersley showed that the growth rate of the number of SAWs originating from any given vertex is, for a large class of lattices, governed by a finite constant known as the \emph{connective constant} of the lattice.
Since then, the connective constant has attracted considerable interest, but its exact value has been determined only in a minimal number of cases, including on the honeycomb lattice in a breakthrough by Duminil-Copin--Smirnov \cite{duminil2012connective} and in a subsequent generalization of their work by Glazman \cite{glazman2015connective}.
For other lattices, including popular instances such as the square lattice in 2D, the study of the connective constant, and SAWs in general, is primarily limited to developing better approximation or sampling methods, perhaps by deriving rigorous lower \cite{kesten1963number,beyer1972lower} and upper \cite{alm1993upper} bounds on the connective constant or through Monte Carlo simulation \cite{madras1988pivot}.

It is also possible to define and study \emph{weighted} SAWs and their associated connective constants.
Such generalizations appear naturally, for instance, in modeling polymer chains interacting with an external electric field \cite{borgs2000anisotropic}, as well as in the analysis of specific lattice models in statistical physics \cite{malakis1979modified,fahrbach2019slow}.
However, there does not appear to be a concerted effort dedicated to the study of weighted SAWs and their connective constants, as existing literature generally focuses on highly specialized aspects of the matter \cite{madras1999pattern,rechnitzer2006haruspicy,james2007new,betz2019scaling,grimmett2020weighted}.
We may partly attribute the lack of \emph{general} results to an inherent difficulty with reporting and comparing results on weighted SAWs, namely their dependence on the weighting scheme, which is context-dependent and difficult to standardize.

This paper aims to bridge a methodological gap in the literature on weighted SAWs by supplying a general method for deriving upper bounds on the weighted connective constant.
To ensure that our results are both reproducible and readily applicable, we have implemented the method in a publicly available software package.
Our definition of the weighted connective constant follows the approach of Hammersley \cite{hammersley1957percolation} and applies to SAWs on a large class of lattices.
Our method for upper-bounding the weighted connective constant is, in turn, based on a work by Alm \cite{alm1993upper}, which produced state-of-the-art bounds for the (unweighted) connective constant on many lattices at the time of its publication.
Specifically, we consider:
\begin{itemize}
    \item the same class of lattices that Alm considered (see Conditions \ref{cond:lattice}--\ref{cond:strongly-connected}), which forms a proper subclass of lattices to which Hammersley's work applies, but for which the results are simpler to state, and which is large enough to include popular structures such as the square and hexagonal lattices in 2D and the simple cubic lattice in 3D;
    \item weighting schemes specified by first assigning a translation-invariant weight to the edges of the lattice, and defining the weight of an SAW \emph{multiplicatively} as the product of the weights of the edges it contains.
\end{itemize}
See Sections \ref{sec:assumptions} and \ref{sec:notation} for precise formulations of these assumptions, Section \ref{sec:definition} for the definition of the weighted connective constant, and Section \ref{sec:algorithm} for descriptions of the method.
We note that our method works without modification for self-avoiding trails (SATs), and that extensions to other weighting schemes are possible but not pursued in this paper.

Our interest in deriving upper bounds on the weighted connective constant is strongly motivated by their potential applications in statistical physics.
It is well-known that upper bounds on the (unweighted) connective constant correspond to lower bounds on the radius of convergence of the generating function for SAWs \cite[(1.3.5)]{madras2013self}.
Recently, Fahrbach--Randall realized in their work on the six-vertex model \cite{fahrbach2019slow} that they could use a \emph{weighted} analog of this fact to develop a Peierls-type argument for a contour model on $\Z^2$ where the horizontal edges are weighted \emph{differently} than the vertical ones.
In this regard, our results in Sections \ref{sec:data} and \ref{sec:generating-function} \emph{systematize} their approach and allow for the development of analogous, improved Peierls-type estimates for contour models on general lattices.
We elaborate on the latter possibility in Section \ref{sec:contour-model}, where we use similar ideas to prove the convergence of the cluster expansion for a particular toy model.

\section{Method}
\label{sec:method}

\subsection{Assumptions on the lattice}
\label{sec:assumptions}

In this paper, a \emph{lattice} is a (possibly directed) infinite graph, embedded in a finite-dimensional Euclidean space, whose drawing forms a regular tiling.
As in \cite{alm1993upper}, we impose the following regularity constraints on the structure of the lattice:
\begin{enumerate}[I.]
    \item There is a finite number $K$ of equivalence classes of vertices under the translations of the lattice. 
    \label{cond:lattice}
    \item The (out-)degree of any vertex is finite.
    \label{cond:bounded-degree}
    \item The lattice is strongly connected, i.e., any two vertices are connected by a walk.
    \label{cond:strongly-connected}
\end{enumerate}
Examples of lattices to which our method applies include the square, triangular, and hexagonal lattices in 2D, and the simple, body-centered, and face-centered cubic lattices in 3D.

\subsection{Notation}
\label{sec:notation}

Throughout the paper, let $\cL$ be a lattice satisfying Conditions \ref{cond:lattice}--\ref{cond:strongly-connected}.
For $n\ge 1$, an $n$-step \emph{walk} on $\cL$ is a sequence $(v_0,e_1,v_1,\dots,e_n,v_n)$ of $n$ edges and $n+1$ vertices such that $e_i$ connects $v_{i-1}$ and $v_{i}$ for all $1\le i\le n$.
We call $v_0$ the \emph{starting point} of the walk.
We say that the walk is a \emph{self-avoiding walk} (SAW) if the vertices $v_0,\dots,v_n$ are distinct, or a \emph{self-avoiding trail} (SAT) if the edges $e_1,\dots,e_n$ are distinct.
In the sequel, we will mostly speak of SAWs to avoid repetition, but all of our results hold without modification for SATs.

We consider a model of \emph{weighted} SAWs, defined as follows.
Denote by $\cE_1,\dots,\cE_d$ the equivalence classes of edges of $\cL$ under its translations.
Given a function $\bz:\set{\cE_1,\dots,\cE_d}\rightarrow\R_{>0}$ or, equivalently, a vector $\bz\in\R_{>0}^{d}$, define the \emph{weight} (with respect to $\bz$) of an SAW $\gamma$ as
\begin{equation}
    \bw(\gamma):=\prod_{i=1}^d \bz_i^{N_i(\gamma)},
\end{equation}
where $N_i(\gamma)$ is the number of edges of $\gamma$ belonging to $\cE_i$ for $1\le i\le d$.
Note that the weight function $\bw(\cdot)$ thus defined is again invariant under the translations of $\cL$.

Recall the definition of $K$ from Condition \ref{cond:lattice}.
Denote by $\cV_1,\dots,\cV_K$ the equivalence classes of vertices of $\cL$ under its translations.
For $1\le k\le K$, $n\ge 1$, and $\bn\in\Z_{\ge 0}^d$, define
\begin{itemize}
    \item $\bc_{n,k}$ as the \emph{sum} of the weights of all $n$-step SAWs starting from a fixed vertex in $\cV_k$;
    \item $c_{\bn,k}$ as the \emph{number} of SAWs starting from a fixed vertex in $\cV_k$ which contains exactly $\bn_i$ edges of $\cE_i$ for each $1\le i\le d$ (this quantity will only be used in Section \ref{sec:generating-function}).
\end{itemize}
By translation invariance, both quantities are well-defined.

\subsection{The weighted connective constant}
\label{sec:definition}

Following Hammersley \cite{hammersley1957percolation}, we define the weighted connective constant, as follows: with
\begin{equation}
    \bc_n:=\max_{1\le k\le K}\bc_{n,k},
\end{equation}
the \emph{weighted connective constant} of SAWs (with respect to $\bz$) on $\cL$ is given by
\begin{equation}
    \bmu:=\inf_{n\ge 1}\bc_n^{1/n}.
\end{equation}
The weighted connective constant may be equivalently defined by a familiar limit used in the usual definition of the (unweighted) connective constant, as follows; importantly, this limit can be made independent of the starting point of SAWs.

\begin{theorem}
\label{thm:limit-definition}
    For all $\bz\in\R_{>0}^{d}$, the weighted connective constant $\bmu$ can be computed as 
    \begin{equation}
    \label{eqn:limit-definition}
        \bmu
        =\lim_{n\to\infty}\bc_{n}^{1/n}
        =\lim_{n\to\infty}\bc_{n,k}^{1/n}
    \end{equation}
    for any $1\le k\le K$.
\end{theorem}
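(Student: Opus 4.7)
The plan is to adapt the classical Hammersley--Fekete argument to the weighted, quasi-transitive setting. First, I would establish the submultiplicative bound $\bc_{n+m, k} \le \bc_{n, k}\,\bc_m$ by concatenation: any $(n+m)$-step SAW starting at $v_k \in \cV_k$ splits uniquely at its $n$-th vertex $u$ into an $n$-step SAW from $v_k$ and an $m$-step SAW from $u$, and the multiplicativity of $\bw$ together with translation invariance (which bounds the total weight of $m$-step SAWs from $u$ by $\bc_{m, k(u)} \le \bc_m$) yields the inequality. Taking the maximum over $k$ gives $\bc_{n+m} \le \bc_n\bc_m$, and Fekete's subadditive lemma applied to $\log \bc_n$ delivers
\begin{equation*}
    \lim_{n\to\infty}\bc_n^{1/n}=\inf_{n\ge 1}\bc_n^{1/n}=\bmu,
\end{equation*}
which is the first equality in \eqref{eqn:limit-definition}.

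For the second equality, the upper bound $\limsup_n \bc_{n,k}^{1/n}\le\bmu$ is immediate from $\bc_{n,k}\le\bc_n$. For the matching lower bound I would invoke Condition \ref{cond:strongly-connected}: between any two classes $(k, j)$ there is a walk from $v_k$ to $v_j$, and loop-erasing it produces an SAW $\pi_{kj}$ of length $L_{kj}$ and strictly positive weight $W_{kj}$. The aim is a class-wise comparison $\bc_{n,j} \le C_{kj}\,\bc_{n+L_{kj},k}$ obtained by prepending $\pi_{kj}$ to each SAW $\gamma$ from $v_j$ and reading off an SAW from $v_k$ of length $n+L_{kj}$ with weight $W_{kj}\,\bw(\gamma)$. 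Maximizing over $j$ and using the finiteness of $K$ (Condition \ref{cond:lattice}) would then yield a single inequality $\bc_n\le C\,\bc_{n+L,k}$; taking $n$-th roots and letting $n\to\infty$ gives $\bmu\le\liminf_n\bc_{n,k}^{1/n}$ and hence $\lim_n\bc_{n,k}^{1/n}=\bmu$.

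The hard part will be controlling the concatenations $\pi_{kj}\cdot\gamma$ that fail self-avoidance because $\gamma$ revisits the interior of $\pi_{kj}$. The plan is to bound the total weight of such ``bad'' $\gamma$ by decomposing them at their first visit to the finite vertex set $V(\pi_{kj})\setminus\{v_j\}$, applying the class-level submultiplicativity of the first step to each piece, and absorbing the resulting subexponential factor into the constants $C_{kj}, L_{kj}$. A useful simplification is to first identify, via pigeonhole on $\arg\max_k \bc_{n,k}$ over $n$, a class $k^\ast$ with $\limsup_n \bc_{n,k^\ast}^{1/n}=\bmu$; one then only needs to carry out the above transport argument between $k^\ast$ and each target class $k$, reducing the combinatorial work to a single pair per target.
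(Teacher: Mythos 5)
The first half of your proposal (concatenation, $\bc_{n+m}\le\bc_n\bc_m$, Fekete) is exactly the paper's argument, and the bound $\limsup_n\bc_{n,k}^{1/n}\le\bmu$ is fine. The genuine gap is in your lower bound, i.e., in the claimed class-wise comparison $\bc_{n,j}\le C_{kj}\,\bc_{n+L_{kj},k}$ obtained by prepending a connecting SAW $\pi_{kj}$. The ``bad'' walks $\gamma$ that re-enter $V(\pi_{kj})\setminus\{v_j\}$ are not a correction that can be absorbed into constants: they carry a non-vanishing fraction of the total weight (consider walks whose very first step lands on the path). Your proposed fix---decompose a bad $\gamma$ at its first visit to the path and apply submultiplicativity to the two pieces---only yields a bound of the form $\sum_{u\in V(\pi_{kj})}\sum_{p=1}^{n}\bc_{p,j}\,\bc_{n-p,k(u)}\le L_{kj}\,n\,C^2\blambda^{n}$ for any $\blambda>\bmu$, an \emph{additive} error of full exponential order $\blambda^n$ that contains no factor of $\bc_{\cdot,k}$ (the vertex $u$ lies on the path, not in the target class $k$, so ``class-level submultiplicativity'' cannot reattach one). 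The resulting inequality $\bc_{n,j}\le W_{kj}^{-1}\bc_{n+L_{kj},k}+\mathrm{const}\cdot n\,\blambda^{n}$ is vacuous for proving $\bmu\le\liminf_n\bc_{n,k}^{1/n}$: since $\bc_n\ge\bmu^n$ and $\blambda>\bmu$, the error term alone can dominate the left-hand side, so the inequality is consistent with $\bc_{n,k}$ being arbitrarily small. Letting $\blambda\downarrow\bmu$ at the end does not rescue it.

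What is needed---and what the paper does, following Hammersley---is a surgery in which \emph{every} term of the comparison retains a factor $\bc_{n,k}$. One compares a vertex $v\in\cV_k$ with a single neighbor $v'\in\cV_{k'}$ and splits a $2n$-step SAW from $v'$ at its first visit to $v$ (if any): each case produces either one SAW or a pair of SAWs \emph{all started at $v$ itself}, the edge swap costing at most a factor $\bz_{\max}/\bz_{\min}$ in weight. Bounding one factor of each product $\bc_{p,k}\bc_{2n-p,k}$ by $C\blambda^{(\cdot)}$ and keeping $\bc_{n,k}$ in the other gives $\bc_{2n,k'}\le \mathrm{poly}(n)\,\blambda^{n}\,\bc_{n,k}$, which is then iterated along a walk joining the classes (doubling lengths at each step, hence the exponent $2^{Q_k}$) before taking $\blambda\downarrow\bmu$. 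Two further small points: your pigeonhole reduction to a class $k^\ast$ only supplies a $\limsup$ statement for $k^\ast$, while the theorem requires a $\liminf$ bound for every class, so it does not reduce the work; and unifying the lengths $L_{kj}$ into a single $L$ in ``$\bc_n\le C\,\bc_{n+L,k}$'' would additionally require extending SAWs by a bounded number of steps, which you do not address.
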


The proof of Theorem \ref{thm:limit-definition} is an adaptation of Hammersley's argument \cite{hammersley1957percolation} to the weighted case.
As it is somewhat lengthy and not directly relevant to the description of our computational method (albeit essential in several proofs to follow), we defer it to Appendix \ref{appx:Hammersley}.

\subsection{Extending Alm's algorithm}
\label{sec:algorithm}

Having defined the weighted connective constant $\bmu$, we now fulfill the primary goal of the paper, namely the derivation of upper bounds on $\bmu$.
Our method is an adaptation of Alm's \cite{alm1993upper} to the weighted case, which boils down to considering a larger class of walks than the SAWs for which the limit in \eqref{eqn:limit-definition} can be computed explicitly.

Let $\bz\in\R_{>0}^d$ and $m\ge 0$.
We say that a symmetry $\tau$ of $\cL$ is \emph{weight-preserving} (with respect to $\bz$) if
\begin{equation}
    \bw(e)=(\bw\circ\tau)(e)\quad\text{ for all edges }e\text{ of }\cL.
\end{equation}
Let $\set{v_1,\dots,v_K}$ be a set of representatives from the vertex classes $\cV_1,\dots,\cV_K$, with $v_k\in\cV_k$ for each $1\le k\le K$ (see Section \ref{sec:notation}).
Let $\cP=\set{\cP_1(m),\dots,\cP_t(m)}$ be a partition of the set of all $m$-step SAWs starting from any of the $v_k$, $1\le k\le K$, such that, for each $1\le s\le t$, any two walks in $\cP_s(m)$ are related by a weight-preserving symmetry of the lattice.
In particular, we consider a single vertex as a $0$-step SAW with weight $1$.

For $n>m$ and $1\le r,s\le t$, define $\bF^\cP_{rs}(m,n)$ as the sum of the weights of all $n$-step SAWs that start with a fixed $m$-step SAW in $\cP_r(m)$ and end with a translation of any $m$-step SAW in $\cP_s(m)$.
Moreover, define 
\begin{equation}
\label{eqn:G-matrix-element}
    \bG^\cP_{rs}(m,n):=\frac{\bF^\cP_{rs}(m,n)}{\bw(\gamma_r(m))},
\end{equation}
where $\gamma_r(m)\in\cP_r(m)$ is arbitrary.
Let $\bG^\cP(m,n)$ be the $t\times t$ matrix with matrix elements $\bG^\cP_{rs}(m,n)$:
\begin{equation}
\label{eqn:G-matrix}
    \bG^\cP(m,n):=[\bG^\cP_{rs}(m,n)]_{1\le r,s\le t}.
\end{equation}
Note that all of the above quantities are well-defined.

For a square matrix $A$, we use the entry-wise $1$-norm $\norm{A}:=\sum_{i,j}\abs{A_{ij}}$.
In the case that $A$ is \emph{primitive}, i.e., $A$ is nonnegative and some positive-integer power of $A$ is positive, we denote by $\lambda_1(A)$ its dominant eigenvalue, which exists uniquely and is positive by the Perron--Frobenius theorem \cite[Theorem 8.4.4]{horn2012matrix}.

The following theorem is our main result, which establishes an upper bound on $\bmu$ in terms of the dominant eigenvalue of $\bG^\cP(m,n)$, assuming that the matrix is primitive.
It combines the ideas of \cite[Theorems 1 and 2]{alm1993upper} into a single, unified result, while allowing us to circumvent some delicate technicalities in practical applications of the theorem that Alm did not address in detail; see Remark \ref{rem:primitivity} after the proof of the theorem.

\begin{theorem}[Upper bound on $\bmu$]
\label{thm:upper-bound}
    If $\bG^\cP(m,n)$ is primitive, then 
    \begin{equation}
        \bmu\le\lambda_1(\bG^\cP(m,n))^{1/(n-m)}.
    \end{equation}
\end{theorem}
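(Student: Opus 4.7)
The plan is to bound $\bc_{N,k}$ for $N$ of the form $m+j(n-m)$ by decomposing each $N$-step SAW starting at $v_k$ into $j$ overlapping $n$-step sub-walks, recognizing the resulting concatenation weight-sum as an entry of $\bG^\cP(m,n)^{j}$, and then applying the Perron--Frobenius theorem together with Theorem \ref{thm:limit-definition}.

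Fix $j\ge 1$ and set $N=m+j(n-m)$. Any $N$-step SAW $\gamma$ starting at $v_k$ admits a canonical decomposition $\gamma^{(1)},\dots,\gamma^{(j)}$ into sub-walks, where $\gamma^{(i)}$ consists of the $n$ consecutive steps beginning at position $(i-1)(n-m)$. Each $\gamma^{(i)}$ is itself an $n$-step SAW, and consecutive sub-walks share an $m$-step overlap which coincides with both the final $m$-step piece of $\gamma^{(i)}$ and the initial $m$-step piece of $\gamma^{(i+1)}$. Writing $r_i$ for the class of the initial $m$-step piece of $\gamma^{(i)}$ and $s_i$ for the class of its final piece, one has $r_{i+1}=s_i$. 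Because weights are multiplicative over edges and overlap edges appear twice in $\prod_i \bw(\gamma^{(i)})$, the weight of $\gamma$ is
\begin{equation}
    \bw(\gamma)=\frac{\prod_{i=1}^{j}\bw(\gamma^{(i)})}{\prod_{i=1}^{j-1}\bw(\gamma_{s_i}(m))},
\end{equation}
so the denominators are exactly the normalizers in \eqref{eqn:G-matrix-element}.

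Summing over such $\gamma$ and relaxing the global self-avoidance condition to the local constraints (each sub-walk is SAW, consecutive sub-walks match at overlap) gives an over-count. By translation invariance of $\bw$ and the weight-preserving property defining $\cP$, the weight-sum of $n$-step SAW extensions of any $\alpha\in\cP_r$ ending in class $\cP_s$ equals $\bF^\cP_{rs}(m,n)$ and is independent of the choice of $\alpha$; dividing out $\bw(\alpha)=\bw(\gamma_r(m))$ yields $\bG^\cP_{rs}(m,n)$. A telescoping cancellation of the overlap weights against the $\bw(\gamma_{s_i}(m))$'s then produces
\begin{equation}
    \bc_{N,k}\le \sum_{r,\,s}a_{r}\,\bigl(\bG^\cP(m,n)^{j}\bigr)_{rs},
\end{equation}
where $a_r$ is the weight-sum of $m$-step SAWs from $v_k$ lying in $\cP_r$. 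Since $\bG^\cP(m,n)$ is primitive, Gelfand's formula together with Perron--Frobenius gives $\|\bG^\cP(m,n)^{j}\|\le C\,\lambda_1(\bG^\cP(m,n))^{j}$ for a constant $C$ independent of $j$. Extracting $N$-th roots, letting $j\to\infty$ so that $j/N\to 1/(n-m)$, and invoking Theorem \ref{thm:limit-definition} delivers the claimed bound.

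The main technical obstacle is the careful bookkeeping of classes and weights in this decomposition: sub-walks after the first start at arbitrary lattice vertices rather than at the prescribed representatives $v_1,\dots,v_K$, so one must verify that $\bG^\cP_{rs}(m,n)$ is genuinely well-defined (independent of the chosen representative $\gamma_r(m)$) and that $\cP$ extends consistently by translation to every $m$-step SAW in $\cL$. This is precisely what allows the extension weight-sums of shifted sub-walks to be replaced by the translation-invariant quantities $\bG^\cP_{rs}(m,n)$ and thereby produces the clean matrix-power structure on which the final eigenvalue estimate rests.
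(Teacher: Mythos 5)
Your proposal is correct and follows essentially the same route as the paper's proof: it relates weight-sums of $\bigl(m+j(n-m)\bigr)$-step SAWs to entries of $\bG^\cP(m,n)^{j}$ via overlapping $n$-step blocks with $m$-step overlaps (the paper phrases this as inductively joining walks and over-counting, you as decomposing and relaxing self-avoidance, which is the same inequality), and then concludes with Theorem \ref{thm:limit-definition} and the Perron--Frobenius/Gelfand limit $\lim_{j\to\infty}\norm{\bG^\cP(m,n)^{j}}^{1/j}=\lambda_1$. The bookkeeping point you flag (well-definedness of $\bG^\cP_{rs}(m,n)$ and translation of tails) is handled identically, and implicitly, in the paper.
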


\begin{proof}
    Let $1\le r,s\le t$.
    Consider two $n$-step SAWs, one starting with a fixed $\gamma_r(m)\in\cP_r(m)$ and ending with some $m$-step SAW $\gamma'(m)$, and another starting with the same $\gamma'(m)$ and ending with a translation of any $\gamma_s(m)\in\cP_{s}(m)$. 
    Joining these SAWs at $\gamma'(m)$ yields a $(2n-m)$-step, but not necessarily self-avoiding, walk that starts with $\gamma_r(m)\in\cP_r(m)$ and ends with a translation of $\gamma_s(m)\in\cP_s(m)$.
    Since every $(2n-m)$-step SAW that starts with $\gamma_r(m)$ and ends with a translation of some $m$-step SAW in $\cP_s(m)$ can be obtained in this way, we may upper bound
    \begin{equation}
        \bG^\cP_{rs}(m,2n-m)
        \le\frac{1}{\bw(\gamma_r(m))}
        \sum_{u=1}^t \frac{\bF^\cP_{ru}(m,n)\bF^\cP_{us}(m,n)}{\bw(\gamma_u(m))}
        =[\bG^\cP(m,n)^2]_{rs},
    \end{equation}
    where $\gamma_u(m)\in\cP_u(m)$ for $1\le u\le t$, and the definition of $\bF^\cP_{ru}(m,n)$ and the summation over $u$ cover all the possibilities of $\gamma'(m)$.
    Proceeding by induction, we find that, for all $q\ge 2$,
    \begin{equation}
        \bG_{rs}^\cP(m,m+q(n-m))\le[\bG^\cP(m,n)^q]_{rs}.
    \end{equation}
    Hence, $\norm{\bG^\cP(m,m+q(n-m))}\le\norm{\bG^\cP(m,n)^q}$.
    Notice that
    \begin{equation}
    \begin{multlined}
        \sum_{k=1}^K\bc_{m+q(n-m),k}
        =\sum_{r=1}^t \abs{\cP_r(m)} \sum_{s=1}^t \bF^\cP_{rs}(m,m+q(n-m))
        \\
        \le\left[\max_{1\le r\le t}\abs{\cP_r(m)}\bw(\gamma_r(m))\right]
        \norm{\bG^\cP(m,m+q(n-m))}.
    \end{multlined}
    \end{equation}
    Therefore,
    \begin{equation}
    \begin{multlined}
        \bmu=\lim_{q\to\infty}\bc_{m+q(n-m)}^{1/[m+q(n-m)]}
        \le \lim_{q\to\infty}\left[\sum_{k=1}^K\bc_{m+q(n-m),k}\right]^{1/[m+q(n-m)]}
        \\
        \le \lim_{q\to\infty}\left[\max_{1\le r\le t}\abs{\cP_r(m)}\bw(\gamma_r(m))\right]^{\frac{1}{m+q(n-m)}}
        \lim_{q\to\infty}\norm{\bG^\cP(m,n)^q}^{\frac{1}{m+q(n-m)}}
        =\lambda_1(\bG^\cP(m,n))^{\frac{1}{n-m}},
    \end{multlined}
    \end{equation}
    where we used Theorem \ref{thm:limit-definition} in the first equality and the assumption that $\bG^\cP(m,n)$ is primitive in the last.
\end{proof}

\begin{remark}[Form of the matrix]
    The main difference between our matrix $\bG^\cP(m,n)$ and the ones Alm considered in the unweighted case is the normalization factor $1/\bw(\gamma_i(m))$ in \eqref{eqn:G-matrix-element}.
    This factor is essential for ensuring that the weight of the joined walk is computed correctly in the proof of Theorem \ref{thm:upper-bound}, by accounting for the weight of the $m$-step overlap.
    In the case that $\bw\equiv 1$, which is equivalent to having unweighted SAWs, our matrix correctly reduces to Alm's with suitable choices of $\cP$.
\end{remark}

\begin{remark}[Generality]
    Theorem \ref{thm:upper-bound} is more general than \cite[Theorem 2]{alm1993upper}:
    whereas Alm \emph{fixed} the sets $\cP_1(m),\dots,\cP_t(m)$ to be the \emph{equivalence classes} of $m$-step SAWs under all (weight-preserving) symmetries of the lattice, we allow for a possibly finer partition.
    While this may seem superfluous, as a finer partition $\cP$ gives rise to a larger matrix $\bG^\cP$, the generalization carries one crucial advantage.
    Namely, when we consider the entries of $\bz$ symbolically, we do not have to account for relations between the variables in $\bz$, which can affect the set of available weight-preserving symmetries.
    For example, on the square lattice, we can place SAWs related by the $x\leftrightarrow y$ reflection in different sets in the partition $\cP$, and the resulting bound would be valid for all $\bz\in\R_{>0}^2$, whereas a coarser partition that groups together these SAWs would require that $\bz_1=\bz_2$ for the $x\leftrightarrow y$ symmetry to be weight-preserving.
    Balancing the trade-off between the dimension of $\bG^\cP(m,n)$ and the set of vectors $\bz$ to which the upper bound applies is, therefore, best handled on a case-by-case basis.
\end{remark}

\begin{remark}[Assumption of primitivity]
\label{rem:primitivity}
    The assumption of primitivity is as essential to Theorem \ref{thm:upper-bound} as to \cite[Theorems 1 and 2]{alm1993upper}, all of which rely crucially on the existence of the dominant eigenvalue and its computability via the power method.
    In practice, Alm noted without proof in \cite[Remark 5]{alm1993upper} that, by discarding \emph{dead-end} $m$-step SAWs and assuming no parity restriction on the lattice, his matrix $\bG(m,n)$ and its dimensionally reduced version $\tilde{\bG}(m,n)$ (by construction) will both be positive for all sufficiently large $n$.
    However, it does not seem easy to quantify how large $n$ has to be relative to $m$ for this conclusion to hold.
    Hence, we have decided to make the primitivity of $\bG^\cP(m,n)$ an explicit assumption in Theorem \ref{thm:upper-bound}, which should be checked in applications.
    In practice, this may be accomplished, for example, by checking for positivity in small powers ($1,2,\dots$) of $\bG^\cP(m,n)$ when convenient, or by using the following equivalent characterization of primitivity \cite[Corollary 8.5.8]{horn2012matrix},
    \begin{equation}
    \label{eqn:primitivity-iff}
        \text{an }n\times n\text{ nonnegative matrix }A\text{ is primitive if and only if }A^{n^2-2n+2}\text{ is positive},
    \end{equation}
    for larger $\bG^\cP(m,n)$.
    We note that the primitivity assumption can only be violated due to the topology of the SAWs or the lattice, not the weighting scheme, since all the components of $\bz$ are strictly positive by assumption.
\end{remark}

\section{Results}
\label{sec:data}

We implemented the method described in Section \ref{sec:method} as a program in Python 3.
We designed the program to be modular and extensible, capable of analyzing SAWs and SATs on arbitrary lattices.
Specific lattices are entered manually as separate modules, including details such as the lattice structure, the weighting scheme $\bz$, and the associated weight-preserving symmetries, and discovered dynamically at runtime.
In particular, even though the full group of weight-preserving symmetries is infinite, as it includes all the translations, the program only requires the coset representatives modulo the translations.
The structure of the main program follows the presentation in Section \ref{sec:algorithm} and comprises three main steps: symbolic matrix construction, numerical evaluation and visualization, and validation.

\subsection{Symbolic matrix construction}

By inputting $\bz$ as a vector of formal variables, we constructed the matrix $\bG^\cP(m,n)$ from \eqref{eqn:G-matrix} symbolically using the \pythoninline{SymPy} library as follows.

First, for a given $m$, we generated a complete set of $m$-step SAWs (or SATs) starting from each of the $K$ vertex class representatives using an iterative, breadth-first algorithm.
For each vertex class representative $v_k$ and $0\le \ell\le m-1$, the algorithm attempts to extend each already-found $\ell$-step SAW starting from $v_k$ to a $(\ell+1)$-step SAW with the same starting point by appending a valid edge. 
Specifically, at each step, the algorithm queries a lattice-specific method \pythoninline{get_neighbor_vectors()} to determine the set of available outgoing edges from the walk's current endpoint, thus accounting for the local topology on lattices with multiple vertex classes. 
The $m$-step SAWs generated for each of the $K$ representatives are then pooled into a single, combined set.

Second, we partitioned this combined set of $m$-step SAWs into equivalence classes under the pre-specified weight-preserving symmetries.
In doing so, we also obtained a canonical representation for each generated $m$-step SAW by applying all weight-preserving symmetries and choosing the lexicographically smallest one as its canonical form.
The number of equivalence classes defines the dimension of the to-be-constructed matrix $\bG^\cP(m,n)$.

Third, we populated the entries of $\bG^\cP(m,n)$ via a recursive, depth-first search.
Starting with the canonical representation of an equivalence class $\cP_r(m)$ of $m$-step SAWs, the algorithm explores all valid extensions of it to an $n$-step SAW via a recursive, depth-first search.
At each step, the algorithm again calls upon the \pythoninline{get_neighbor_vectors()} method for the current endpoint to account for the local topology of the lattice.
For each extension it finds, the algorithm identifies its last $m$ steps as its \emph{tail} and computes the canonical form of the tail to determine the index of the equivalence class $\cP_s(m)$ to which it belongs.
The symbolic weight of the full $n$-step SAW is then added to the $(r,s)$-entry of the matrix.

Finally, in accordance with \eqref{eqn:G-matrix-element}, we divided each entry of the symbolic matrix by the weight of its corresponding $m$-step SAW, completing the construction of $\bG^\cP(m,n)$.
The final matrix is saved using \pythoninline{pickle} to facilitate further analysis.

\subsection{Numerical evaluation and visualization}
\label{sec:evaluation}

To derive upper bounds on the weighted connective constant $\bmu$, we numerically evaluated the dominant eigenvalue of $\bG^\cP(m,n)$.
To facilitate the evaluation, we converted the symbolic expression of the matrix into a numerical function using \pythoninline{sympy.lambdify()}.
In all the computational cases considered below, we verified the primitivity of $\bG^\cP(m,n)$ using \eqref{eqn:primitivity-iff}.

\subsubsection{Square lattice}

We studied both weighted SAWs and SATs on the square lattice, using (the coset representatives of) the weight-preserving symmetries $(n_1,n_2)\mapsto(\pm n_1,\pm n_2)$, where the signs are chosen independently.
We adopted the convention that $x$ and $y$ are the weights assigned to each horizontal and vertical edge.

For small values of $m$, the symbolic matrix $\bG^\cP(m,n)$ is sufficiently small to be diagonalized exactly, which yields closed-form expressions of upper bounds on $\bmu$; see Table \ref{tab:closed-form_square}.
The expressions grow quickly in complexity with increasing values of $n$.

\begin{table}
\centering
\begin{tabular}{ccc}
\toprule
Object & $(m,n)$ & Upper bound \\ 
\midrule
SAW,SAT & $(1,2)$ & $\frac{1}{2}\left(x+y+\sqrt{x^2+14xy+y^2}\right)$ \\
SAW,SAT & $(1,3)$ & $\frac{1}{2^{1/2}}\left[x^2+8xy+y^2+(x+y)\sqrt{x^2+14xy+y^2}\right]^{1/2}$ \\
SAW & $(1,4)$ & $\begin{multlined}
    \tfrac{1}{2^{1/3}}\left[x^3+12xy(x+y)+y^3+\vphantom{\sqrt{x^6}}\right. \\ 
    \left.\sqrt{x^6+24x^5y+136x^4y^2+254x^3y^3+136x^2y^4+24xy^5+y^6}\right]^{1/3}
    \end{multlined}$ \\
\addlinespace 
SAT & $(1,4)$ & $\frac{1}{2^{1/3}}\left[x^3+12xy(x+y)+y^3+(x^2+5xy+y^2)\sqrt{x^2+14xy+y^2}\right]^{1/3}$ \\
\bottomrule
\end{tabular}
\caption{Upper bounds on the weighted connective constant $\bmu$ of self-avoiding walks (SAWs) and self-avoiding trails (SATs) on the square lattice}
\label{tab:closed-form_square}
\end{table}

We resorted to numerical evaluation for more general pairs $(m,n)$.
In Figure \ref{fig:square}, we computed the dominant eigenvalue of the numerical matrix on a $100\times 100$ grid in the parameter regime $0\le x,y\le 1$, and plotted the contour where $\lambda_1(\bG^\cP(m,n))=1$.
The relative positions of the contours indicate that our bounds become progressively tighter as $m$ and $n$ increase, agreeing with Alm's observation in the unweighted case \cite[Conjecture 1]{alm1993upper}.
This trend is most visible in the inset plots, where we magnified the contours around the line $y=x$, which corresponds to having isotropic weights.

\begin{figure}
    \centering
    \begin{subfigure}[t]{0.49\textwidth}
        \includegraphics[page=1,width=\linewidth]{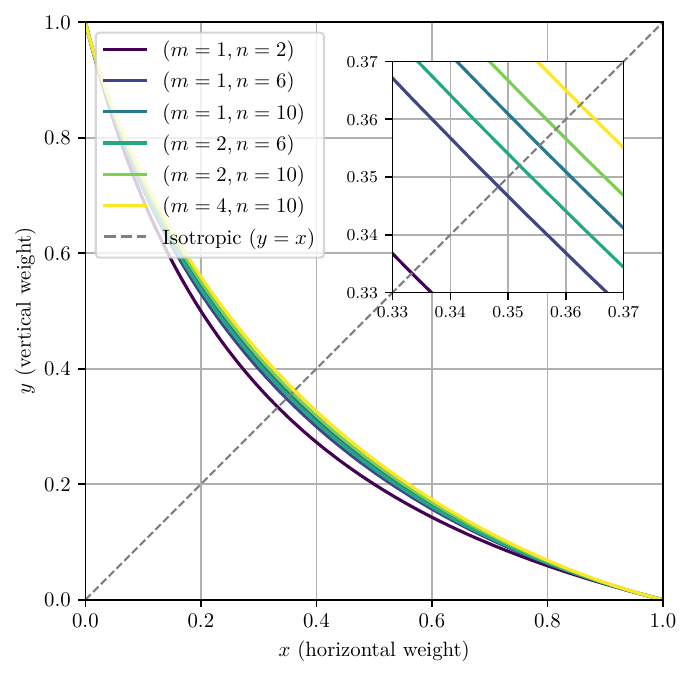}
        \caption{Self-avoiding walks}
        \label{fig:square_walk}
    \end{subfigure}
    \hfill
    \begin{subfigure}[t]{0.49\textwidth}
        \includegraphics[page=2,width=\linewidth]{plots.pdf}
        \caption{Self-avoiding trails}
        \label{fig:square_trail}
    \end{subfigure}
    \caption{Plots of the curves $\lambda_1(\bG^\cP(m,n))=1$ for the square lattice}
    \label{fig:square}
\end{figure}

\subsubsection{Simple cubic lattice}
\label{sec:simple-cubic}

To study weighted SAWs and SATs on the simple cubic lattice, we adopted the convention that steps in the $\pm\vb{e}_1,\pm\vb{e}_2,\pm\vb{e}_3$ directions are weighted by $x,y,z$, respectively.

Under the most general weighting scheme, where no relation between $x,y,z$ is imposed, we used the weight-preserving symmetries $(n_1,n_2,n_3)\mapsto(\pm n_1,\pm n_2,\pm n_3)$, where the signs are independently chosen.
Here, even for $m=1$, exact diagonalization of the symbolic matrix is already impractical, albeit technically possible using Cardano's formula, so we resorted again to numerical evaluation.
In Figure \ref{fig:simple-cubic}, we plot the isosurface $\lambda_1(\bG^\cP(m,n))=1$, generated by computing the dominant eigenvalue on a three-dimensional grid of weights and applying the marching cubes algorithm implemented in the \pythoninline{scikit-image} library.
Figures \ref{fig:simple-cubic_walk_full} and \ref{fig:simple-cubic_trail_full} show a full view of the isosurfaces.
The visibility of exactly one isosurface indicates that its corresponding upper bound is, for all $x,y,z>0$, the best among all that appear in the plot.
To overcome the occlusion, Figures \ref{fig:simple-cubic_walk_local} and \ref{fig:simple-cubic_trail_local} show a local view of the isosurfaces near the isotropic line $x=y=z$, where the relative positions of the isosurfaces are clearly visible and again suggest that the bounds improve with larger values of $m$ and $n$.

\begin{figure}
    \centering
    \begin{subfigure}[t]{0.5\textwidth}
        \includegraphics[page=3,width=\linewidth]{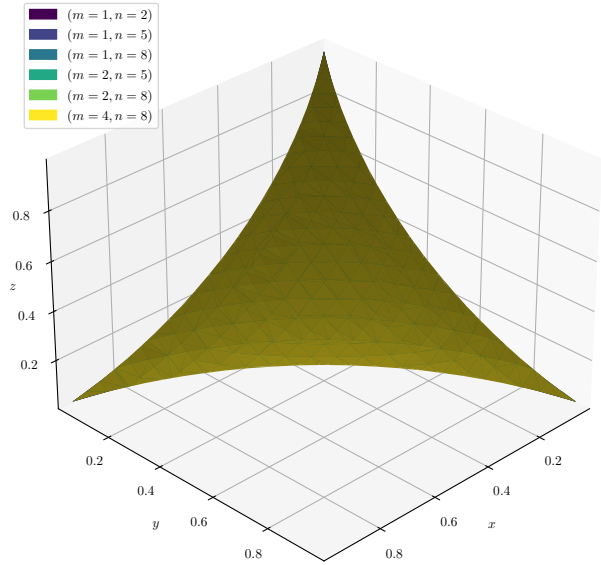}
        \caption{Self-avoiding walks: full view}
        \label{fig:simple-cubic_walk_full}
    \end{subfigure}%
    \hfill
    \begin{subfigure}[t]{0.5\textwidth}
        \includegraphics[page=4,width=\linewidth]{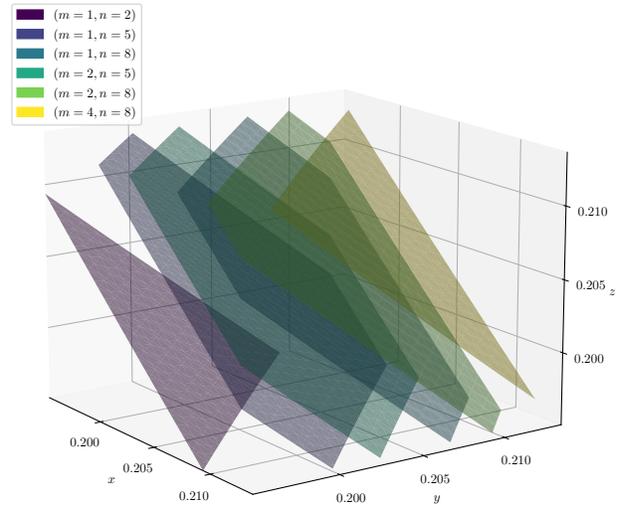}
        \caption{Self-avoiding walks: local view}
        \label{fig:simple-cubic_walk_local}
    \end{subfigure}
    \bigskip
    \begin{subfigure}[t]{0.5\textwidth}
        \includegraphics[page=5,width=\linewidth]{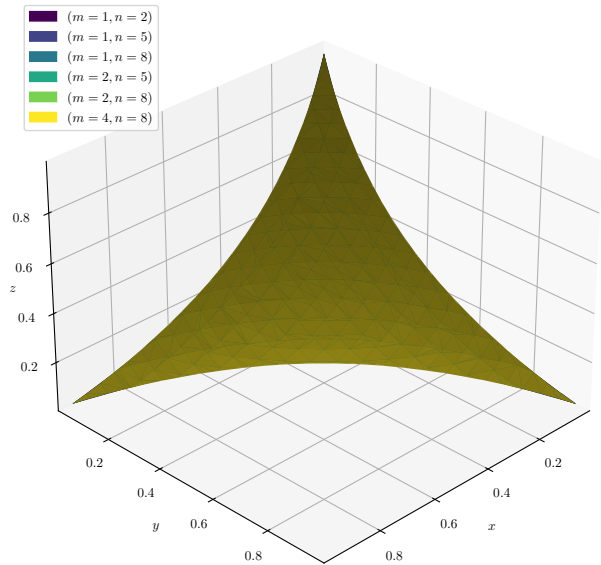}
        \caption{Self-avoiding trails: full view}
        \label{fig:simple-cubic_trail_full}
    \end{subfigure}%
    \hfill
    \begin{subfigure}[t]{0.5\textwidth}
        \includegraphics[page=6,width=\linewidth]{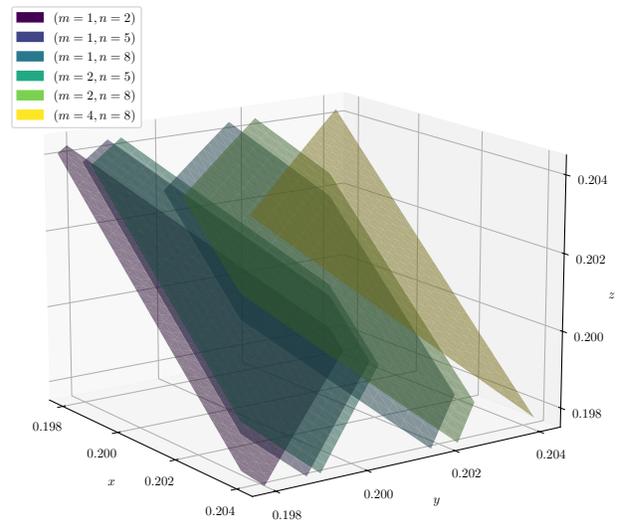}
        \caption{Self-avoiding trails: local view}
        \label{fig:simple-cubic_trail_local}
    \end{subfigure}
    \caption{Plots of the isosurfaces $\lambda_1(\bG^\cP(m,n))=1$ for the simple cubic lattice}
    \label{fig:simple-cubic}
\end{figure}

We also considered the special case where $x=y$ and $z$ remains independent.
The weight-preserving symmetries are given by $(n_1,n_2,n_3)\mapsto(\pm n_1,\pm n_2,\pm n_3)$ and $(n_1,n_2,n_3)\mapsto(\pm n_2,\pm n_1,\pm n_3)$.
With this reduction, closed-form expressions of upper bounds on $\bmu$ are readily accessible for both SAWs and SATs for small values of $m$; see Table \ref{tab:closed-form_simple-cubic-reduced}.

\begin{table}
\centering
\begin{tabular}{ccc}
\toprule
Object & $(m,n)$ & Upper bound \\ 
\midrule
SAW,SAT & $(1,2),(2,3)$ & $\tfrac{1}{2}\left(3x+z+\sqrt{9x^2 + 26 x z + z^2}\right)$ \\
SAW,SAT & $(1,3)$ & $\tfrac{1}{2^{1/2}}\left[9 x^2 + 16 x z + z^2 + (3x+z)\sqrt{9x^2 + 26 x z + z^2}\right]^{1/2}$ \\
\bottomrule
\end{tabular}
\caption{Upper bounds on the weighted connective constant $\bmu$ of self-avoiding walks (SAWs) and self-avoiding trails (SATs) on the simple cubic lattice, under the constraint that steps in the $\pm\vb{e}_1$ and $\pm\vb{e}_2$ directions are weighted equally}
\label{tab:closed-form_simple-cubic-reduced}
\end{table}

\subsubsection{Triangular lattice}

To study weighted SAWs and SATs on the triangular lattice, we adopted Alm's convention \cite[Section 5]{alm1993upper} to position the vertices of the triangular lattice on $\Z^2$, whereby the origin is connected to the six points $(0, 1)$, $(0, -1)$, $(1, 0)$, $(-1, 0)$, $(1, 1)$, and $(-1, -1)$.
In the special case where steps in the $\pm\vb{e}_1$ and $\pm\vb{e}_2$ directions are weighted equally by $x$ and those in the $\pm(1,1)$ directions by $z$, using the weight-preserving symmetries $(n_1,n_2)\mapsto(n_1,n_2),(-n_1,-n_2),(n_2,n_1),(-n_2,-n_1)$, we again obtained closed-form expressions of upper bounds on $\bmu$; see Table \ref{tab:closed-form_triangular-reduced}.

\begin{table}
\centering
\begin{tabular}{ccc}
\toprule
Object & $(m,n)$ & Upper bound \\ 
\midrule
SAW,SAT & $(1,2)$ & $\tfrac{1}{2}\left(3x+z+\sqrt{9x^2 + 26 x z + z^2}\right)$ \\
SAW & $(1,3)$ & $\tfrac{1}{2^{1/2}}\left(9 x^2 + 15 x z + z^2 + \sqrt{
 81 x^4 + 182 x^3 z + 143 x^2 z^2 + 34 x z^3 + z^4}\right)^{1/2}$ \\
SAT & $(1,3)$ & $\tfrac{1}{2^{1/2}}\left[9 x^2 + 16 x z + z^2 + (3x+z)\sqrt{9 x^2 + 26 x z + z^2}\right]^{1/2}$ \\
\bottomrule
\end{tabular}
\caption{Upper bounds on the weighted connective constant $\bmu$ of self-avoiding walks (SAWs) and self-avoiding trails (SATs) on the (tilted) triangular lattice, under the constraint that steps in the $\pm\vb{e}_1$ and $\pm\vb{e}_2$ directions are weighted equally}
\label{tab:closed-form_triangular-reduced}
\end{table}

\subsubsection{Hexagonal lattice}

Lastly, to give an example of a lattice with more than one vertex class, we considered weighted SAWs and SATs on the hexagonal lattice, which has $K=2$.
Like for the triangular lattice, it is convenient to position the vertices of the hexagonal lattice on $\Z^2$.
Specifically, $(0,0)$ represents one vertex class and is connected to the three vertices $(1,0)$, $(0,1)$, and $(-1,-1)$; $(1,0)$ represents the other vertex class and is connected to $(0,0)$, $(1,-1)$, and $(2,1)$; and the remainder of the lattice is generated by the translations by $(2,1)$ and $(1,-1)$.

Under the most general weighting scheme, where edges in the direction of $(1,0)$, $(0,1)$, and $(1,1)$ are weighted respectively by $x$, $y$, and $z$, the lattice admits the weight-preserving symmetries $(x,y)\mapsto(x,y),(1-x,-y)$.
In the special case where $x=y$, using the same weight-preserving symmetries, we obtained closed-form expressions of upper bounds on $\bmu$; see Table \ref{tab:closed-form_hexagonal-reduced}.

\begin{table}
\centering
\begin{tabular}{ccc}
\toprule
Object & $(m,n)$ & Upper bound \\ 
\midrule
SAW,SAT & $(1,2)$ & $\tfrac{1}{2}\left[x+\sqrt{x(x+8z)}\right]$ \\
SAW,SAT & $(1,3)$ & $\tfrac{1}{2^{1/2}}\left[x^2+4xz+x\sqrt{x(x+8z)}\right]^{1/2}$ \\
SAW,SAT & $(1,4)$ & $\tfrac{1}{2^{1/3}}\left[x^3+6x^2z+x(x+2z)\sqrt{x(x+8z)}\right]^{1/3}$ \\
\bottomrule
\end{tabular}
\caption{Upper bounds on the weighted connective constant $\bmu$ of self-avoiding walks (SAWs) and self-avoiding trails (SATs) on the (tilted) hexagonal lattice, under the constraint that steps in the $\pm\vb{e}_1$ and $\pm\vb{e}_2$ directions are weighted equally}
\label{tab:closed-form_hexagonal-reduced}
\end{table}

\subsection{Validation}

We validated our results against Alm's upper bounds on the connective constant of unweighted SAWs using the following observation: for all $c>0$, 
\begin{equation}
\label{eqn:scaling}
    \lambda_1(\bG^\cP_{c\bz}(m,n))=c^{n-m}\lambda_1(\bG^\cP_{\bz}(m,n)),
\end{equation} 
where the subscript on $\bG$ indicates the weight with respect to which the matrix is defined, and which is an immediate consequence of \eqref{eqn:G-matrix-element}.
Writing $\vb{1}:=(1,1,\dots,1)\in\R^d$, where $d$ is as defined in Section \ref{sec:notation}, it follows from \eqref{eqn:scaling} that if $\lambda_1(\bG^\cP_{c\vb{1}}(m,n))^{1/(n-m)}=1$, then 
\begin{equation}
\label{eqn:reciprocal}
    \lambda_1(\bG^\cP_{\vb{1}}(m,n))^{1/(n-m)}=c^{-1}.
\end{equation}
Now, by a straightforward adaptation of \cite[Theorem 2]{alm1993upper}, we know that our matrix $\bG^\cP_{\vb{1}}(m,n)$ has the same dominant eigenvalue as Alm's $\bG(m,n)$, provided that both matrices are primitive.
As we have verified the primitivity of $\bG^\cP_{\vb{1}}(m,n)$ in all our cases, and Alm has noted that so is the case for his $\bG(m,n)$ \cite[Remark 5]{alm1993upper}, the identity \eqref{eqn:reciprocal} implies that the $x$-coordinate of the intersection of the contour or isosurface $\lambda_1(\bG^\cP(m,n))=1$ with the isotropic line $x=y$ (for the square lattice) or $x=y=z$ (for the simple cubic lattice and the triangular lattice) is equal to the reciprocal of Alm's upper bound on the unweighted connective constant for the same $(m,n)$.
The closed-form expressions in Tables \ref{tab:closed-form_square}--\ref{tab:closed-form_hexagonal-reduced} indeed agree in this sense with the bounds given in \cite[Tables 3, 4, 5, and 9]{alm1993upper}.

\section{Discussion}

\subsection{Convergence of the anisotropic generating function}
\label{sec:generating-function}

As observed in \cite[Proposition 2.2(iii)]{borgs2000anisotropic}, upper bounds on the weighted connective constant $\bmu$ have immediate implications for the domain of convergence of the \emph{anisotropic generating function} of SAWs and SATs.
We formulate this formally as follows.

Let $\bx:=(\bx_1,\dots,\bx_d)$ be a $d$-tuple of formal variables.
For $\bn\in\Z_{\ge 0}^d$ and $1\le k\le K$, we introduce the shorthand $\bx^\bn:=\prod_{i=1}^d \bx_i^{\bn_i}$, and recall from Section \ref{sec:notation} the definition of $c_{\bn,k}$.
The anisotropic generating function of SAWs (or SATs) on $\cL$ with starting point in $\cV_k$ is defined as the formal power series
\begin{equation}
\label{eqn:generating-function}
    g_k(\bx):=\sum_{\bn\in\Z_{\ge 0}^d} c_{\bn,k} \bx^\bn;
\end{equation}
see \cite[(1.4)]{borgs2000anisotropic} or \cite{rechnitzer2006haruspicy}.

\begin{corollary}
\label{cor:generating-function}
    For all $1\le k\le K$ and $0\le m<n$, the anisotropic generating function $g_k(\bx)$ converges absolutely on the open region
    \begin{equation}
        \cD^\cP(m,n):=\bigcup_{\substack{\bz\in\R_{>0}^d\\\lambda_1(\bG^\cP(m,n))<1}}[-\bz,\bz],
    \end{equation}
    where we abbreviated $[-\bz,\bz]:=\prod_{i=1}^d[-\bz_i,\bz_i]$.
    In particular, $g_k(\bx)$ is analytic on $\cD^\cP(m,n)$.
\end{corollary}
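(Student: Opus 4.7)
The plan is to reduce the absolute convergence of the multi-index series $g_k(\bx)$ to a one-parameter comparison with $\sum_{N\ge 0}\bc_{N,k}$, and then invoke Theorems \ref{thm:limit-definition} and \ref{thm:upper-bound} on the latter.

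First, fix $\bx \in \cD^\cP(m,n)$ and, by the definition of the region, pick a witness $\bz \in \R_{>0}^d$ with $\lambda_1(\bG^\cP(m,n)) < 1$ (computed with respect to $\bz$) and $|\bx_i| \le \bz_i$ for every $1\le i\le d$. The bridge between $g_k$ and $\bc_{N,k}$ is a bookkeeping identity: grouping the series in \eqref{eqn:generating-function} by total walk length $N := \sum_i \bn_i$ and comparing against the definition $\bw(\gamma)=\prod_i \bz_i^{N_i(\gamma)}$ gives
\[
\sum_{\bn\in\Z_{\ge 0}^d} c_{\bn,k}\,\bz^\bn = \sum_{N=0}^\infty \bc_{N,k}.
\]
Since $|\bx^\bn|\le \bz^\bn$ pointwise, the series of absolute values of $g_k(\bx)$ is dominated term by term by $\sum_{N\ge 0}\bc_{N,k}$. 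Theorem \ref{thm:upper-bound} applied with this $\bz$ gives $\bmu \le \lambda_1(\bG^\cP(m,n))^{1/(n-m)} < 1$, and Theorem \ref{thm:limit-definition} gives $\bc_{N,k}^{1/N}\to\bmu$ as $N\to\infty$. The root test then delivers convergence of $\sum_N \bc_{N,k}$, and hence absolute convergence of $g_k(\bx)$.

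To upgrade this pointwise statement to the openness of $\cD^\cP(m,n)$ and the analyticity of $g_k$ on it, I would appeal to continuity: the entries of $\bG^\cP(m,n)$ are polynomials in $\bz$, so on $\R_{>0}^d$ their zero-pattern is fixed, primitivity persists under small perturbations, and $\lambda_1$ depends continuously on $\bz$. Thus, given $\bx$ with witness $\bz$, a slight enlargement $\bz'$ componentwise above $\bz$ still satisfies $\lambda_1(\bG^\cP(m,n))<1$, and $\bx$ lies in the interior of $[-\bz',\bz']$. The same domination argument on this open box yields uniform absolute convergence of the series via the Weierstrass M-test, from which analyticity on $\cD^\cP(m,n)$ follows.

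The main obstacle is really just the reindexing identity linking the multi-index generating function to the length-indexed series $\bc_{N,k}$; once that is accepted, the rest is a one-line root-test argument combined with a standard continuity argument for the Perron--Frobenius eigenvalue.
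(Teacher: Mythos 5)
Your proposal is correct and takes essentially the same route as the paper: evaluate the series at the witness $\bz$ by regrouping terms by total length to get $\sum_N \bc_{N,k}$, conclude convergence from $\lim_N \bc_{N,k}^{1/N}=\bmu\le\lambda_1(\bG^\cP(m,n))^{1/(n-m)}<1$ via Theorems \ref{thm:limit-definition} and \ref{thm:upper-bound}, dominate $|\bx^\bn|\le\bz^\bn$, and invoke the standard several-complex-variables fact for analyticity. The only cosmetic difference is that the paper deduces openness directly from the scaling identity \eqref{eqn:scaling} (replacing $\bz$ by $c\bz$ with $c>1$ close to $1$), whereas you use continuity of the Perron eigenvalue in $\bz$ together with the fixed zero pattern on $\R_{>0}^d$; both arguments are valid.
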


\begin{proof}
    The openness of $\cD^\cP(m,n)$ follows straightforwardly from \eqref{eqn:scaling}.
    To prove convergence, let $\bz\in\R_{>0}^{d}$ be such that $\lambda_1(\bG^\cP(m,n))<1$.
    Since $g_k(\bz)$ contains only nonnegative terms, we can rearrange
    \begin{equation}
        g_k(\bz)=\sum_{\ell=0}^\infty\sum_{\sum_{i=1}^d \bn_i=\ell}c_{\bn,k}\bz^\bn
        =\sum_{\ell=0}^\infty\bc_{\ell,k}.
    \end{equation}
    By Theorem \ref{thm:upper-bound}, 
    \begin{equation}
        \lim_{\ell\to\infty}\bc_{\ell,k}^{1/\ell}=\bmu\le\lambda_1(\bG^\cP(m,n))<1,
    \end{equation}
    which implies that $g_k(\bz)<\infty$.
    By the comparison test, $g_k(\bx)$ converges absolutely on $[-\bz,\bz]$, which proves the first claim.
    The second claim is a standard fact from the theory of several complex variables; see \cite[Theorem 2.4.2]{hormander1990introduction}.
\end{proof}

\subsection{Contour models with anisotropic weights}
\label{sec:contour-model}

As an application of Corollary \ref{cor:generating-function}, we introduce and study now a \emph{toy} contour model on $\Z^2$, where each contour is assigned a weight via a (strongly) \emph{anisotropic} rule.

Before discussing motivations, let us first introduce the model.
Recall from Section \ref{sec:notation} the definition of a self-avoiding trail (SAT) $(v_0,e_1,v_1,\dots,e_n,v_n)$.
A \emph{circuit} is an SAT whose endpoints coincide: $v_0=v_n$.
To avoid trivial cases, we require that $n\ge 1$.
We consider an ensemble of circuits on $\Z^2$, as follows.
Given $\alpha,\epsilon\in\R$, the weight of a circuit $\gamma$ is defined as
\begin{equation}
\label{eqn:contour-model_weight}
    \bw(\gamma):=\epsilon^{N_h(\gamma)}\alpha^{N_v(\gamma)},
\end{equation}
where $N_h(\gamma)$ and $N_v(\gamma)$ denote the number of horizontal and vertical edges in $\gamma$.
The circuits interact via a hard-core pair interaction $\delta(\gamma,\gamma')$, which forbids $\gamma$ and $\gamma'$ from sharing an edge.
The formal partition function of the model is
\begin{equation}
\label{eqn:contour-model_partition-function}
    \Xi_{t,\epsilon}:=\sum_{\Gamma'\subseteq\Gamma}\prod_{\gamma\in\Gamma'}\bw(\gamma)\prod_{\set{\gamma,\gamma'}\subseteq\Gamma'}\delta(\gamma,\gamma'),
\end{equation}
where $\Gamma$ denotes the set of all circuits on $\Z^2$.
The definition \eqref{eqn:contour-model_partition-function} is adapted to finite volumes in the standard way.

To motivate this model, it is helpful to recall the contour representation of the Ising model on $\Z^2$ at low temperature and zero magnetic field \cite[Section 3.7.2]{friedli2018statistical}.
There, expecting neighboring pairs of opposite spins to be energetically unfavorable, one constructs \emph{contours} on the dual graph of $\Z^2$ which delineate interfaces between contiguous regions of $+$ and $-$ spins.
Each contour $\gamma$ carries a penalty factor proportional to $e^{-2\beta\abs{\gamma}}$, where $\beta>0$ is the inverse temperature and $\abs{\gamma}$ is the length of $\gamma$.
Then, a \emph{Peierls argument} proves the existence of two stable phases at large $\beta$, when the quantity $e^{-2\beta}$ overwhelms a finite combinatorial factor, often taken to be the growth rate of the number of \emph{non-backtracking walks} on the lattice \cite[(3.39)]{friedli2018statistical}, which bounds the entropy of contours.
The estimates in this argument are \emph{isotropic} in the sense that they do not explicitly distinguish between horizontal and vertical steps in a contour.
Then, it is natural to wonder what would happen if one were to differentiate between steps in different directions: \emph{would a Peierls argument still apply?}

This question is no mere mathematical curio.
In their recent study of Glauber dynamics for the six-vertex model \cite{fahrbach2019slow}, Fahrbach--Randall encountered \emph{fault lines} in the antiferroelectric phase of the model, whose weights are determined by an \emph{anisotropic} rule.
They provided an affirmative answer to the question in their context, by showing that a Peierls-type argument applies if one uses the generating function of \emph{weighted non-backtracking walks} to control the energy-entropy competition directly, as opposed to treating the energetic and entropic aspects separately in the case of the Ising model \cite[Section 4.2]{fahrbach2019slow}.
Incidentally, weighted non-backtracking walks are precisely what we have needed to count to evaluate $\lambda_1(\bG^\cP(1,2))$ for SAWs on the square lattice, which explains the appearance of the expression in Table \ref{tab:closed-form_square} in their calculations \cite[Lemma 4.6]{fahrbach2019slow}.
From this point of view, the method we have developed, in the context of enabling the development of Peierls-type arguments for general contour models, generalizes the technique of Fahrbach--Randall by 
\begin{itemize}
    \item applying to anisotropic contour models on lattices with any finite number $K$ of vertex classes: the square lattice on which the six-vertex model is defined has $K=1$;
    \item allowing \emph{tighter} control over the energy-entropy competition using better bounds on $\bmu$, computed using larger values of $m$ and $n$: the argument by Fahrbach--Randall corresponds to taking $(m,n)=(1,2)$. 
\end{itemize}
For instance, by adapting the arguments in \cite[Section 4.2]{fahrbach2019slow} and using the other expressions for SAWs in Table \ref{tab:closed-form_square}, one should be able to deduce slow mixing of the Glauber dynamics for a \emph{larger} set of parameters in the antiferroelectric regime of the six-vertex model.

Another context where anisotropic contours arise is the high-activity columnar phase in the hard-square model on $\Z^2$.
Due to the phenomenon of \emph{deconfinement of half vacancies} \cite{ramola2012high}, also known as \emph{sliding} \cite{jauslin2017crystalline,mazel2019high,mazel2025high,hadas2025columnar}, \emph{defects} in the model exhibit strong anisotropy at high activities; see the example in Figure \ref{fig:rod}, constructed with $2\times 2$ squares. 
There, the four red squares at the center make up a single defect, called a \emph{rod} of length $4$ in the language of Ramola--Dhar \cite{ramola2012high}, over a background of row-ordered squares.
Keeping in mind that large activities only directly penalize the presence of vacancies, it is clear that the penalty does not equally affect the horizontal and vertical boundaries of the rod.
Indeed, the vacancies a rod opens up are restricted to its ends, while its (vertical) length is modulated by a much weaker entropic penalty due to its interruption of otherwise ordered rows \cite[(13)]{nath2016stability}.
In part due to this anisotropy, proving mathematically the existence of the columnar phase remains an open problem, except in the case of $2\times2$ squares, for which Hadas--Peled have recently given a proof based on reflection positivity \cite{hadas2025columnar}.
Notably, they showed that the \emph{usual} Peierls argument still applies to contours defined on a \emph{mesoscopic} scale \cite[Theorem 5.2]{hadas2025columnar}.

\begin{figure}
    \centering
    \includegraphics[width=0.5\linewidth]{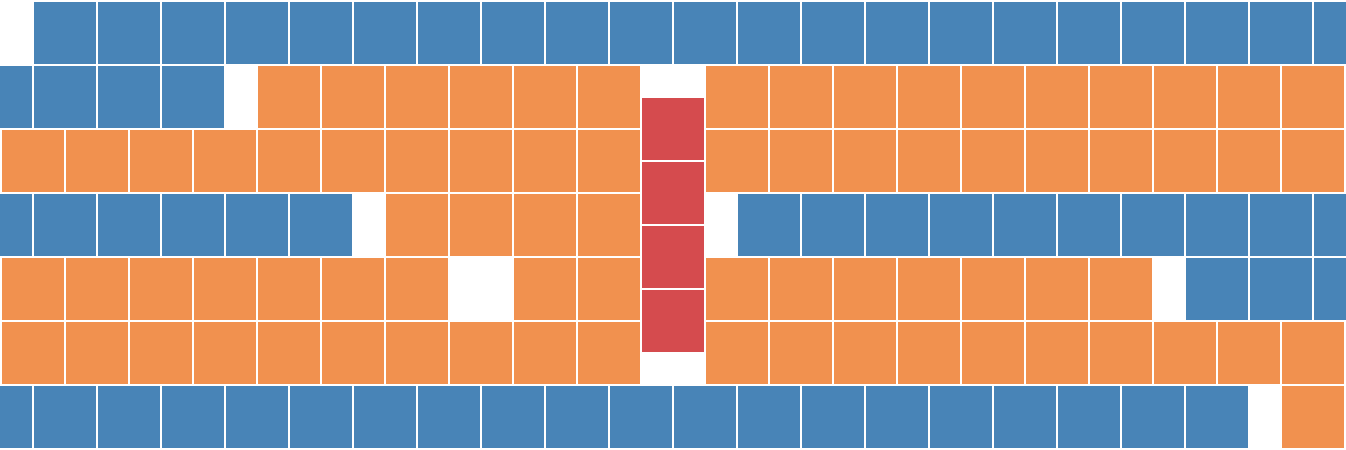}
    \caption{The red squares at the center make up a single defect over a background of row-ordered squares.
    The heavy penalty for vacancies in the high-activity regime is only directly associated with the rod's vertical ends, not its (vertical) length.
    This mechanism results in the formation of anisotropic defects at high activities
    }
    \label{fig:rod}
\end{figure}

Such systems directly inspire the structure of our toy model. 
We aim to capture the essential features of the anisotropy---a high cost for certain step directions and a low cost for others---in a simple mathematical form, which has led us to the weight function \eqref{eqn:contour-model_weight}.
To keep things simple yet interesting, we will treat $\epsilon$ as a perturbation variable and allow $\alpha$ to depend on $\epsilon$.
We will use Table \ref{tab:closed-form_square} and Corollary \ref{cor:generating-function} to prove that if the vertical component of the circuit is \emph{subcritical} in the $\epsilon\to0$ limit, then, for all small enough $\abs{\epsilon}$, our toy model admits a convergent \emph{cluster expansion} \cite{mayer1937statistical}.
It is well-known that the cluster expansion delivers robust control over the correlation functions of the model \cite[Chapter 5]{friedli2018statistical}, but we will content ourselves with proving its convergence.
We leave potential applications of our method to the hard-square and related models for future work.

\begin{proposition}
\label{prop:cluster-expansion}
    Let $f(\epsilon)$ be defined in a neighborhood of $0$.
    Suppose that $f$ is continuous at $0$ with $\abs{f(0)}<1$.
    Then, there exists $\epsilon_0>0$ such that, for all $\epsilon\in[-\epsilon_0,\epsilon_0]$, the model \eqref{eqn:contour-model_partition-function} with $\alpha:=f(\epsilon)$ satisfies the Koteck\'y--Preiss criterion \cite{kotecky1986cluster} for the convergence of the cluster expansion: defining $a:\Gamma\rightarrow\R_{>0}$ by
    \begin{equation}
    \label{eqn:cluster-expansion_a}
        a(\gamma):=t\left[N_h(\gamma)+N_v(\gamma)\right],
    \end{equation}
    where $t>0$ is any fixed constant such that $\abs{f(0)}e^t<1$, we have that, for all $\gamma_\ast\in\Gamma$,
    \begin{equation}
    \label{eqn:cluster-expansion_criterion}
        \sum_{\gamma\in\Gamma}\abs{\bw(\gamma)}e^{a(\gamma)}\abs{\delta(\gamma,\gamma_\ast)-1}\le a(\gamma_\ast).
    \end{equation}
\end{proposition}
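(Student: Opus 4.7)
The plan is to verify the Kotecký--Preiss criterion \eqref{eqn:cluster-expansion_criterion} via a union bound combined with the generating-function estimate from Corollary \ref{cor:generating-function}. First, I would use
\[
\sum_\gamma |\bw(\gamma)| e^{a(\gamma)} |\delta(\gamma,\gamma_*)-1| \le \sum_{e \in \gamma_*} \sum_{\gamma \ni e} |\bw(\gamma)| e^{a(\gamma)},
\]
reducing the task to controlling the inner sum for each edge $e = \{u,v\}$ of $\gamma_*$. Cutting each circuit $\gamma$ through $e$ at $e$ puts these circuits in bounded-to-one correspondence with SATs $\gamma'$ from $v$ to $u$ avoiding $e$, yielding
\[
\sum_{\gamma \ni e} |\bw(\gamma)| e^{a(\gamma)} \le C_0\, |w(e)|\, e^t \sum_{\gamma'} |\bw(\gamma')| e^{a(\gamma')},
\]
where $C_0$ is an absolute constant and $|w(e)|$ equals $|\epsilon|$ or $|f(\epsilon)|$ according to whether $e$ is horizontal or vertical.

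The decisive structural observation is that every circuit on $\Z^2$ must have $N_h \ge 2$ and $N_v \ge 2$ in order to close, so the cut SAT $\gamma'$ satisfies $N_h(\gamma') \ge 1$ when $e$ is horizontal and $N_h(\gamma') \ge 2$ when $e$ is vertical. Since $|f(0)|e^t < 1$ by the choice of $t$, Table \ref{tab:closed-form_square} evaluated at $(0,|f(0)|e^t)$ gives $\lambda_1(\bG^\cP(1,2)) = |f(0)|e^t < 1$, so by continuity I may fix $\tilde\epsilon > 0$ with $(\tilde\epsilon, |f(0)|e^t) \in \cD^\cP(1,2)$. For all sufficiently small $|\epsilon|$, one also has $|\epsilon|e^t \le \tilde\epsilon$ and $(\tilde\epsilon, |f(\epsilon)|e^t) \in \cD^\cP(1,2)$, so Corollary \ref{cor:generating-function} gives $g(\tilde\epsilon, |f(\epsilon)|e^t) < \infty$. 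Combined with the elementary inequality $(|\epsilon|e^t)^{N_h(\gamma')} \le (|\epsilon|e^t/\tilde\epsilon)^{k}\, \tilde\epsilon^{N_h(\gamma')}$ with $k=1$ (horizontal $e$) or $k=2$ (vertical $e$), this produces the bound
\[
\sum_{\gamma'} |\bw(\gamma')| e^{a(\gamma')} \le (|\epsilon|e^t/\tilde\epsilon)^{k}\, g(\tilde\epsilon, |f(\epsilon)|e^t).
\]

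Putting the pieces together, one obtains $\sum_{\gamma \ni e} |\bw(\gamma)| e^{a(\gamma)} \le C\epsilon^{2}$ for some constant $C$ depending only on $t$, $\tilde\epsilon$, $f(0)$, and the value of $g$, but not on $\gamma_*$; the two powers of $\epsilon$ arise from $|w(e)|$ together with $(|\epsilon|e^t/\tilde\epsilon)^{k}$. Summing over the $N_h(\gamma_*) + N_v(\gamma_*)$ edges of $\gamma_*$ and comparing with $a(\gamma_*) = t[N_h(\gamma_*) + N_v(\gamma_*)]$, the criterion reduces to $C\epsilon^{2} \le t$, which holds for all $\epsilon \in [-\epsilon_0, \epsilon_0]$ with $\epsilon_0 := \sqrt{t/C}$. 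The main difficulty in this plan is the vertical-edge case: the naive bound $\sum_{\gamma \ni e,\, e \text{ vertical}} \le |f(\epsilon)|e^t\, g$ does not vanish as $\epsilon \to 0$, so the extra factor of $|\epsilon|$ extracted from the topological constraint $N_h(\gamma') \ge 2$ on closed circuits is what allows the estimate to close regardless of how near $|f(0)|$ is to $1$.
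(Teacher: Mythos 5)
Your proposal is correct and follows essentially the same route as the paper's proof: a union bound anchored at the edges of $\gamma_\ast$, domination of the per-edge sum by the weighted SAT generating function, and convergence via Corollary \ref{cor:generating-function} with the $(m,n)=(1,2)$ bound from Table \ref{tab:closed-form_square}, together with the observation that every circuit carries at least two horizontal edges. The only difference is cosmetic: you cut out the anchoring edge and trade $\abs{\epsilon}e^t$ against a fixed $\tilde\epsilon$ to get an explicit $O(\epsilon^2)$ bound and a concrete $\epsilon_0$, whereas the paper reaches the same smallness by a soft continuity-at-$0$ argument (just make sure your constant $C$ uses a bound on $g$ that is uniform in $\epsilon$, e.g.\ by monotonicity in the second argument).
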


\begin{proof}
    Let $\epsilon$ be in the domain of definition of $f$.
    Let $a(\gamma)$ be as in \eqref{eqn:cluster-expansion_a}.
    Notice that
    \begin{equation}
    \label{eqn:cluster-expansion_anchor}
        \sum_{\gamma\in\Gamma}\abs{\bw(\gamma)}e^{a(\gamma)}
        \abs{\delta(\gamma,\gamma_\ast)-1}
        \le [N_h(\gamma)+N_v(\gamma)]
        \max_{e_\ast\in\gamma_\ast}\sum_{\substack{\gamma\in\Gamma\\\gamma\ni e_\ast}}\abs{\bw(\gamma)}e^{a(\gamma)}.
    \end{equation}
    Let $e_\ast\in\gamma_\ast$.
    Since every circuit $\gamma$ containing $e_\ast$ is an SAT starting from a (fixed) endpoint of $e_\ast$, we have that
    \begin{equation}
    \label{eqn:cluster-expansion_domination}
        \sum_{\substack{\gamma\in\Gamma\\\gamma\ni e_\ast}}\abs{\bw(\gamma)}e^{a(\gamma)}
        \le \max_{v\in\Z^2}\sum_{\substack{\gamma\mathrm{\ SAT}\\\mathrm{starting\ from\ }v}}\abs{\bw(\gamma)}e^{a(\gamma)}
        =g\left(\abs{\epsilon}e^t,\abs{f(\epsilon)} e^t\right).
    \end{equation}
    In the above equality, we have adopted the convention that the first and second arguments of $g(\cdot,\cdot)$ correspond respectively to the horizontal and vertical steps of a trail (cf. \eqref{eqn:generating-function}), and suppressed the subscript on $g$ since the square lattice has $K=1$.
    Using the closed-form expression of $\lambda_1(\bG^\cP(1,2))$ in Table \ref{tab:closed-form_square}, as well as the assumption that $f$ is continuous at $0$ with $\abs{f(0)}<1$, it is easy to see that there exists $\epsilon_1>0$ such that,
    \begin{equation}
        \left(\abs{\epsilon}e^{t},\abs{f(\epsilon)} e^t\right)\in\cD^\cP(1,2)\quad\text{ for all }\epsilon\in[-\epsilon_1,\epsilon_1].
    \end{equation}
    By \eqref{eqn:cluster-expansion_domination} and Corollary \ref{cor:generating-function}, the sum
    \begin{equation}
        \sum_{\substack{\gamma\in\Gamma\\\gamma\ni e_\ast}}\abs{\bw(\gamma)}e^{a(\gamma)}
        =\sum_{\substack{\gamma\in\Gamma\\\gamma\ni e_\ast}}\left(\abs{\epsilon} e^{t}\right)^{N_h(\gamma)}\left(\abs{f(\epsilon)}e^t\right)^{N_v(\gamma)}
    \end{equation}
    converges on $[-\epsilon_1,\epsilon_1]$, is continuous at $0$, and, because every circuit contains at least two horizontal edges, vanishes at $0$.
    By continuity, there exists $\epsilon_0\in(0,\epsilon_1]$ such that the sum is less than $t$ on $[-\epsilon_0,\epsilon_0]$.
    Recalling \eqref{eqn:cluster-expansion_a} and \eqref{eqn:cluster-expansion_anchor}, we conclude that \eqref{eqn:cluster-expansion_criterion} holds for all $\epsilon\in[-\epsilon_0,\epsilon_0]$.
\end{proof}

\section{Conclusion}

Building on an earlier work by Alm \cite{alm1993upper}, we have developed a systematic method for deriving rigorous upper bounds on the connective constant for weighted self-avoiding walks (SAWs) and self-avoiding trails (SATs) on a general class of lattices, filling a methodological gap in the existing literature.
We have implemented our method in a publicly available software package, and applied it to weighted SAWs and SATs on the square lattice, the triangular lattice, the hexagonal lattice, and the simple cubic lattice, obtaining upper bounds as functions of the weights assigned to various classes of edges.
We have shown that such bounds imply the convergence of the anisotropic generating function for SAWs and SATs on specific domains in the parameter space.
Generalizing a technique of Fahrbach--Randall \cite{fahrbach2019slow}, we have further demonstrated that the knowledge of convergence allows us to prove Peierls-type estimates for contour models in statistical physics that feature anisotropic weights.

The present work leaves open several directions for future work.
For instance, one may try to derive rigorous \emph{lower bounds} on the connective constant for weighted SAWs by generalizing existing methods in the literature for unweighted SAWs \cite{kesten1963number,beyer1972lower}.
The other direction that we mention, which we find particularly exciting, is to explore the possibility of using our method to design Peierls-type arguments for concrete models in statistical physics, e.g., the hard-square model on $\Z^2$ \cite{hadas2025columnar}.

\subsection*{Acknowledgments}

The author thanks Joel Lebowitz for helpful background discussions and Ian Jauslin and Ron Peled for many enjoyable conversations about the hard-square and related models.
This work is partially supported by a Research \& Conference Travel Award from the School of Graduate Studies at Rutgers University.

\subsection*{Data availability}

The data that support the findings of this study are openly available at the following URL: \url{https://github.com/qhe28/weighted_connective_constant}.

\bibliographystyle{plain}
\bibliography{bibliography}

\appendix

\section{Proof of Theorem \ref{thm:limit-definition}}
\label{appx:Hammersley}

Following Hammersley \cite{hammersley1957percolation}, we will begin by using a sub-multiplicativity argument to obtain an equivalent definition of $\bmu$.
Then, we will relate the sums $\bc$ of weights of SAWs starting from adjacent vertices using surgeries on individual SAWs.
Lastly, by induction, we will relate the sums of weights of SAWs starting from vertices in different equivalence classes $\cV$, from which the theorem follows.
Compared to \cite{hammersley1957percolation}, the main complications are due to our use of weighted edges, which requires keeping a careful track throughout the proof.

First, observe that, for all $n_1,n_2\ge 1$, every $(n_1+n_2)$-step SAW is a \emph{concatenation} of an $n_1$- and an $n_2$-step SAW.
Hence,
\begin{equation}
\label{eqn:existence_concatenation}
    \bc_{n_1+n_2,k}\le\bc_{n_1,k}\bc_{n_2}\le\bc_{n_1}\bc_{n_2}\quad\text{ for all }1\le k\le K.
\end{equation}
Maximizing over $k$, we deduce that 
\begin{equation}
    \bc_{n_1+n_2}\le\bc_{n_1}\bc_{n_2}.
\end{equation}
By Fekete's lemma \cite[Theorem B.5]{friedli2018statistical},
\begin{equation}
\label{eqn:existence_Fekete}
    \bmu=\inf_{n\ge 1}\bc_{n}^{1/n}=\lim_{n\to\infty}\bc_n^{1/n}.
\end{equation}
It follows that, for all $\blambda>\bmu$, there exists a constant $C=C(\blambda)\ge 1$ such that 
\begin{equation}
\label{eqn:existence_exponential-growth}
    \bc_n\le C\blambda^{n}\quad\text{ for all }n\ge 1.
\end{equation}

Next, suppose that $v,v'$ are neighboring vertices of $\cL$, connected by an edge $e$, with $v\in\cV_k$ and $v'\in\cV_{k'}$.
Let $n\ge 1$.
Let $\gamma$ be a generic $2n$-step SAW starting from $v'$.
Introducing the shorthands $\bz_{\min}:=\min_{1\le i\le d}\bz_i$ and $\bz_{\max}:=\max_{1\le i\le d}\bz_i$, we have three disjoint cases:
\begin{itemize}
    \item $\gamma$ does not contain $v$.
    Then, $(v,e,\gamma)$ is a $(2n+1)$-step SAW starting from $v$, with 
    \begin{equation}
        \bw((v,e,\gamma))=\bw(e)\bw(\gamma)\ge\bz_{\min}\bw(\gamma).
    \end{equation}
    \item $\gamma$ first visits $v$ after $p$ steps (i.e., edges), where $1\le p\le 2n-1$.
    Let $\gamma_1$ and $\gamma_2$ be the self-avoiding sub-walks of $\gamma$ consisting of its first $p-1$ and last $2n-p$ steps, and $e_p$ be the $p$th edge of $\gamma$.
    Then, $(v,e,\gamma_1)$ and $\gamma_2$ are $p$- and $(2n-p)$-step SAWs starting from $v$, with 
    \begin{equation}
        \bw((v,e,\gamma_1))\bw(\gamma_2)=\frac{\bw(e)}{\bw(e_p)}\bw(\gamma)\ge\frac{\bz_{\min}}{\bz_{\max}}\bw(\gamma).
    \end{equation}
    \item $\gamma$ visits $v$ only at the very end, that is, after $2n$ steps.
    Let $\gamma_1$ be the self-avoiding sub-walk of $\gamma$ consisting of its first $2n-1$ steps, and $e_{2n}$ be the last edge of $\gamma$.
    Then, $(v,e,\gamma_1)$ is a $2n$-step SAW starting from $v$, with 
    \begin{equation}
        \bw((v,e,\gamma_1))=\frac{\bw(e)}{\bw(e_{2n})}\bw(\gamma)\ge\frac{\bz_{\min}}{\bz_{\max}}\bw(\gamma).
    \end{equation}
\end{itemize}
As the resulting (pairs of) SAWs in each case are distinct, we deduce that
\begin{equation}
\label{eqn:existence_change-starting-point}
    \bc_{2n,k'}\le \frac{1}{\bz_{\min}}\bc_{2n+1,k}+\frac{\bz_{\max}}{\bz_{\min}}\sum_{p=1}^{2n-1}\bc_{p,k}\bc_{2n-p,k}+\frac{\bz_{\max}}{\bz_{\min}}\bc_{2n,k}.
\end{equation}
Applying \eqref{eqn:existence_concatenation} and \eqref{eqn:existence_exponential-growth} to the RHS of \eqref{eqn:existence_change-starting-point}, we get that
\begin{equation}
\begin{multlined}
    \bc_{2n,k'}
    \le \frac{1}{\bz_{\min}}C\blambda^{n+1}\bc_{n,k}
    +\frac{\bz_{\max}}{\bz_{\min}}\left(\sum_{p=1}^{n}C\blambda^p\cdot C\blambda^{n-p}\bc_{n,k}
    +\sum_{p=n+1}^{2n-1}C\blambda^{p-n}\bc_{n,k}\cdot C\blambda^{2n-p}\right)
    \\
    +\frac{\bz_{\max}}{\bz_{\min}}C\blambda^n \bc_{n,k}
    =\blambda^n\bc_{n,k}\left[\frac{C\blambda}{\bz_{\min}}+\frac{C^2\bz_{\max}}{\bz_{\min}}(2n-1)+\frac{C\bz_{\max}}{\bz_{\min}}\right].
\end{multlined}
\end{equation}

By induction, for all walks $(v_0,e_1,v_1,\dots,e_q,v_q)$ on $\cL$, with $v_j\in\cV_{k_j}$ for each $0\le j\le q$, we have that
\begin{equation}
\label{eqn:existence_inductive-bound}
    \bc_{2^q n,k_q} \le \blambda^{(2^q-1)n}\bc_{n,k_0}
    \prod_{j=0}^{q-1}\left[\frac{C\blambda}{\bz_{\min}}+\frac{C^2\bz_{\max}}{\bz_{\min}}\left(2^{j+1}n-1\right)+\frac{C\bz_{\max}}{\bz_{\min}}\right]\quad\text{ for all }n\ge 1.
\end{equation}
Let $1\le k\le K$.
Let $Q_k\ge 0$ be the smallest integer such that, for all $1\le k'\le K$, there exists a walk of length $q_{k,k'}\le Q_k$ connecting a vertex in $\cV_k$ to a vertex in $\cV_{k'}$ (cf. Condition \ref{cond:strongly-connected}).
Combining \eqref{eqn:existence_concatenation}, \eqref{eqn:existence_exponential-growth}, and \eqref{eqn:existence_inductive-bound}, we get that
\begin{equation}
\begin{multlined}
    \bc_{2^{Q_k}n,k'}
    \le C\blambda^{(2^{Q_k}-2^{q_{k,k'}})n} \bc_{2^{q_{k,k'}}n,k'}
    \le C\blambda^{(2^{Q_k}-1)n} \bc_{n,k}
    \\
    \cdot\prod_{j=0}^{q_{k,k'}-1}\left[\frac{C\blambda}{\bz_{\min}}+\frac{C^2\bz_{\max}}{\bz_{\min}}\left(2^{j+1}n-1\right)+\frac{C\bz_{\max}}{\bz_{\min}}\right]\quad \text{ for all }1\le k'\le K,\ n\ge 1.
\end{multlined}
\end{equation}
Maximizing over $k'$, we have that
\begin{equation}
\begin{multlined}
    \bc_{2^{Q_k}n}
    \le C\blambda^{(2^{Q_k}-1)n} \bc_{n,k}
    \\
    \cdot\max_{1\le k'\le K}\prod_{j=0}^{q_{k,k'}-1}\left[\frac{C\blambda}{\bz_{\min}}+\frac{C^2\bz_{\max}}{\bz_{\min}}\left(2^{j+1}n-1\right)+\frac{C\bz_{\max}}{\bz_{\min}}\right]\quad \text{ for all }n\ge 1.
\end{multlined}
\end{equation}
By \eqref{eqn:existence_Fekete}, it follows that
\begin{equation}
\begin{split}
    \bmu{}&=\lim_{n\to\infty}\bc_{2^{Q_k}n}^{1/(2^{Q_k}n)}
    \\
    {}&\le \liminf_{n\to\infty}\Bigg\{C\blambda^{(2^{Q_k}-1)n} \bc_{n,k}
    \max_{1\le k'\le K}\prod_{j=0}^{q_{k,k'}-1}\left[\frac{C\blambda}{\bz_{\min}}+\frac{C^2\bz_{\max}}{\bz_{\min}}\left(2^{j+1}n-1\right)+\frac{C\bz_{\max}}{\bz_{\min}}\right]\Bigg\}^{1/(2^{Q_k}n)}
    \\
    {}&\le \blambda^{(2^{Q_k}-1)/2^{Q_k}}\left(\liminf_{n\to\infty}\bc_{n,k}^{1/n}\right)^{1/2^{Q_k}}.
\end{split}
\end{equation}
Rearranging the terms, we find that
\begin{equation}
    \left(\frac{\bmu}{\blambda}\right)^{2^{Q_k}}\blambda
    \le \liminf_{n\to\infty}\bc_{n,k}^{1/n}
    \le \lim_{n\to\infty}\bc_n^{1/n}
    =\bmu.
\end{equation}
Taking $\blambda\downarrow\bmu$ completes the proof.

\end{document}